\newcommand{\qq}{\mathbb{Q}}
\newcommand{\zz}{\mathbb{Z}}
\newcommand{\hh}{\mathtt{h}}
\newcommand{\DD}{\mathcal{D}(M)_F}
\newcommand{\RR}{R[[M]]_F}
\theoremstyle{plain}
\newtheorem{theo}{Theorem}
\theoremstyle{definition}
\newtheorem{defi}[theo]{Definition}
\newtheorem{rem}[theo]{Remark}
\newtheorem{example}[theo]{Example}
\begin{document}
%
%%%%%%%%%%%%%%%%%%%%%%%%%%%%%%%%%%%

\title{On formal Schubert polynomials}

\author{Kirill Zainoulline}

\keywords{Hecke algebra, elliptic formal group law, Kazhdan-Lusztig basis,
  Bott-Samelson resolution, Schubert polynomial, Grothendieck polynomial}

\begin{abstract}
Present notes can be viewed as an 
attempt to extend the notion of Schubert/Grothendieck polynomial of Lascoux-Sch\"utzenberger to the context of an arbitrary formal group law and of an arbitrary oriented cohomology theory.
\end{abstract}

\maketitle

Let $F\in R[[x,y]]$ be a commutative one-dimensional formal group law over a
commutative unital ring $R$ and let $\hh$ be an algebraic oriented
cohomology theory with the coefficient ring $\hh(pt)=R$. According to Levine-Morel \cite{levmor-book}
there is a 1-1 correspondence between such $F$'s and universal $\hh$'s. 
Indeed, given an oriented theory $\hh$ the respective
formal group law is determined by the Quillen formula for the first
charactersitic classes in the theory $\hh$ of the tensor product of line bundles
\[
c_1^\hh(L_1\otimes L_2)=F(c_1^\hh(L_1),c_1^\hh(L_2))
\] 
and given a formal
group law $F$ over $R$ one obtains the respective universal theory $\hh$ by tensoring with the algebraic cobordism $\Omega$,
i.e. \[
\hh(-):=\Omega(-)\otimes_{\Omega(pt)} R,\] 
where $\Omega(pt)\to R$ is obtained by specializing coefficients of
the universal formal group law.

For example, additive
formal group law $F_a(x,y)=x+y$ corresponds to the Chow theory $\hh=CH$,
multiplicative $F_m(x,y)=x+y-xy$ to the Grothendieck $\hh=K_0$ and the
universal one $F_u$ to the algebraic cobordism $\hh=\Omega$. Observe that in the
first two cases $\hh(pt)=\zz$ and in the last case the coefficient
ring is the Lazard ring which is infinitely generated over $\zz$.

\medskip

Let $G$ be a split semisimple linear group over a field $k$ containing a split
maximal torus $T$.
Following \cite[\S2]{CPZ} consider the formal group
algebra 
\[
\RR:=R[[x_\omega]]_{\omega\in
  M}/(x_0,x_{\omega+\omega'}-F(x_{\omega},x_{\omega'})),
\] 
where $M$ is the weight
lattice, together with the augmentation map $\epsilon\colon \RR \to
R$, $x_\omega \mapsto 0$. From the geometric point of view
$\RR$ models the completion of the
equivariant cohomology
$\hh_T(pt)$ and the map $\epsilon$ is the forgetful
map. Algebraically, $\RR$ is non-canonically isomorphic to the ring of
formal power series in $\mathop{rk}(M)$ variables.

Consider the algebra of formal
divided difference operators $\DD$ on $\RR$ and let
$\epsilon \DD^*:=Hom_R(\epsilon\circ \DD,R)$ denote the dual of the 
algebra of augmented operators. 
The main result of \cite[\S13]{CPZ} says that
if $\hh$ is a (weakly birationally invariant) oriented cohomology theory corresponding
to $F$ (e.g. $\hh=CH$, $K_0$ or $\Omega$), then there is an
$R$-algebra isomorhism 
\begin{equation}
\epsilon \DD^*\simeq \hh(X),
\end{equation}
where $X$ is the variety of Borel subgroups of $G$.
Moreover, it was shown that
the $R$-basis of $\hh(X)$ consisting of classes of Bott-Samelson resolutions of
Schubert varieties corresponds to the basis of $\epsilon \DD^*$ constructed as
follows:

First, for each element of the Weyl group $w\in W$ one chooses a reduced decomposition
$w=s_{i_1}s_{i_2}\ldots s_{i_r}$ into a product of simple reflections and denotes by $I_w=(i_1,\ldots,i_r)$
the respective reduced word.
Then one shows that the $R$-linear operators $\epsilon C_{I_w}^F$ defined by
composing $\epsilon$ with the
composite of the formal
divided difference operators $C_{I_w}^F=C_{i_1}^F\circ\ldots
\circ C_{i_r}^F$ 
form a basis of $\epsilon \DD$ \cite[Prop.~5.4]{CPZ}. 
Finally, the elements $A_{I_w}(z_0)$ give the desired basis, 
where $z_0$ is the element of $\epsilon \DD^*$ dual to $\epsilon
C_{I_{w_0}}^F(u_0)$ for some specially chosen $u_0\in (\ker\epsilon)^{\dim X}$, 
$w_0$ is the element of maximal length and $A_i$ is the operator on $\epsilon
\DD^*$ dual to the
operator on $\epsilon \DD$ given by composition on the right with
$C_i^F$ \cite[Thm.~13.13]{CPZ}.

\medskip

One of the major difficulties in extending the Schubert calculus to
such generalized theories $\hh$ (and, hence, the Schubert polynomials)
is the fact that
all mentioned bases are non-canonical, i.e. depend on
choices of reduced decompositions. Moreover, according to \cite{BE90}
they are canonical if and only if $F$ has the form
$F(u,v)=u+v-\beta uv$ for some $\beta\in R$. In other words, the
Bott-Samelson resolutions of Schubert varieties provide a canonical basis of $\hh(X)$ only for Chow groups ($\beta=0$),
Grothendieck $K_0$ ($\beta$ is invertible) and connective $K$-theory ($\beta\neq 0$ is non-invertible).

\medskip

In the present notes we try to overcome this difficulty and, hence,
provide a {\em canonical basis} of $\hh(X)$ by either
\begin{itemize}
\item[(1)] averaging
over all reduced decompositions, i.e. over all classes of
Bott-Samelson resolutions; or 
\item[(2)] exploiting the Kazhdan-Lusztig theory in the case of a
  special elliptic formal group law.
\end{itemize}

Observe that approach (1) works only after
inverting the Hurwitz numbers, e.g. over $\qq$, but over $\qq$ all formal group
laws become isomorphic. Therefore, one may suspect that we simply reduce to the known cases
of additive or multiplicative formal group laws. But this
isomorphism does not preserve the formal difference operators as well
as many other structures, so this is not the case.

Approach (2) seems to be even more interesting as it gives a canonical
basis integrally. However, we don't know how to extend it to other examples
of formal group laws.

\section{Averaging over reduced expressions}

Consider the evaluation map $\mathfrak{c}^F\colon \RR \to \epsilon
\DD^*$ of \cite[\S6]{CPZ}. 
Observe that on the level of cohomology (after identifying with
$\hh(X)$) 
it coincides with the characteristic map
induced by $\omega \mapsto c_1^\hh(L(\omega))$. In the case of the
additive or multiplicative formal group law it gives the characteristic
map described by Demazure in \cite{De74}, \cite{De73}. Moreover, according
to \cite[Thm.~6.9]{CPZ} if the Grothendieck torsion index $\tau$ of $G$ is
invertible (this always holds for Dynkin types $\mathrm{A}$ and
$\mathrm{C}$), 
then the kernel of $\mathfrak{c}^F$ is the ideal $\mathcal{I}_F^W$
generated by augmented $W$-invariant elements, and we obtain an $R$-algebra
isomorphism
\begin{equation}
\RR/\mathcal{I}_F^W \simeq \epsilon \DD^*,
\end{equation}
which in view of the results of \cite{HMSZ} and \cite{CZZ} relate the
invariant theory of $W$ with an $F$-version of the Hecke
ring of Kostant-Kumar \cite{KK86}, \cite{KK90}, \cite{Ku02} and
Bressler-Evens \cite{BE87}.
In general, though the kernel of
$\mathfrak{c}^F$ always contains the invariant ideal $\mathcal{I}^W$, the
induced map $\mathfrak{c}^F\colon \RR/\mathcal{I}_F^W \to \epsilon \DD^*$ is neither injective nor surjective.

Observe also that if $\tau$ is invertible, then we can identify the
basis $A_{I_w}(z_0)$ of $\epsilon \DD^*$ with the basis $C_{I_w^{rev}}(u_0)$
of $\RR/\mathcal{I}_F^W$, where $u_0=[pt]$
corresponds to the
class of a point (see \cite[Thm.~6.7]{CPZ}). The latter suggest to
define for each $w\in W$ the following element in $R_\qq[[M]]_F/\mathcal{I}_F^W$:
\begin{equation}
P_w^F:=\tfrac{1}{|red(w)|} \sum_{I_w\in red(w)} C_{I_w^{rev}}([pt]),
\end{equation}
where the sum is taken over the set $red(w)$ of all reduced words of $w$.

The results of \cite{HMSZ} and \cite{CZZ} then imply that
$\{P_w^F\}_{w\in W}$ is the desired canonical basis over $\qq$:

\begin{theo}
The elements $\{P_w^F\}_{w\in W}$ form a $R_{\qq}=R\otimes_\zz
\qq$-basis of $ \RR/\mathcal{I}_F^W$ and, hence, of $\hh(X)\otimes_\zz\qq$.
\end{theo}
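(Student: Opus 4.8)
The plan is to compare the family $\{P_w^F\}_{w\in W}$ with one \emph{fixed} Bott--Samelson basis and to show that the resulting transition matrix is unitriangular, hence invertible, over $R_\qq$. Choose, for every $w\in W$, a reduced word $I^0_w\in red(w)$. Over $R_\qq$ the Grothendieck torsion index $\tau$ is a nonzero integer, hence invertible, so \cite[Thm.~6.7]{CPZ} applies and the classes $C_{(I^0_v)^{rev}}([pt])$, $v\in W$, form an $R_\qq$-basis of $R_\qq[[M]]_F/\mathcal{I}_F^W$, which by the isomorphisms~(1) and~(2) is identified with $\hh(X)\otimes_\zz\qq$. As each $P_w^F$ is a $\qq$-linear average of the classes $C_{I_w^{rev}}([pt])$ over $I_w\in red(w)$, it suffices to establish, for every $w$, the congruence
\[
P_w^F\ \equiv\ C_{(I^0_w)^{rev}}([pt]) \pmod{\ \textstyle\sum_{\ell(v)<\ell(w)}R_\qq\, C_{(I^0_v)^{rev}}([pt])\,}.
\]
Granting it, the matrix expressing $\{P_w^F\}$ in the basis $\{C_{(I^0_v)^{rev}}([pt])\}$ is unitriangular with respect to any total order refining the length filtration, so it has determinant $1$ and $\{P_w^F\}_{w\in W}$ is again a basis, of $R_\qq[[M]]_F/\mathcal{I}_F^W$ and hence of $\hh(X)\otimes_\zz\qq$.

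To prove the congruence I pass to an associated graded. Since $R_\qq$ is a domain and the congruence is an identity between $R_\qq$-coordinates of elements of a free $R_\qq$-module, it may be verified after base change to the fraction field $K=\operatorname{Frac}(R_\qq)$, a field of characteristic zero; so assume $R_\qq=K$ and write $A:=K[[M]]_F/\mathcal{I}_F^W$. Filter $K[[M]]_F$ and $A$ by the powers of the augmentation ideal $\mathfrak{m}=\ker\epsilon$. Each formal divided difference $C_i^F$ lowers this filtration by one and induces on $\operatorname{gr}K[[M]]_F\cong S_K(M)$ exactly the additive (Chow-theoretic) divided difference operator, which satisfies the braid relations; moreover $[pt]\in\mathfrak{m}^{\dim X}$ and its leading term is, up to sign, the product $\prod_{\alpha>0}\alpha$ of the positive roots. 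The leading-term ideal of $\mathcal{I}_F^W$ contains every positive-degree $W$-invariant of $S_K(M)$ (symmetrize a homogeneous lift, using $\mathrm{char}\,K=0$), so $\operatorname{gr}A$ is a graded quotient of $S_K(M)/(S_K(M)^W_+)\cong CH(X)\otimes K$ by Borel's theorem; comparing dimensions degree by degree, and using $\dim_K A=|W|=\dim_K\bigl(CH(X)\otimes K\bigr)$, forces $\operatorname{gr}A\cong CH(X)\otimes K$. In particular the $\mathfrak{m}$-adic filtration of $A$ coincides with the length filtration of the Bott--Samelson classes, i.e. $\mathfrak{m}^{j}A=\sum_{\ell(v)\le \dim X-j}K\cdot C_{(I^0_v)^{rev}}([pt])$. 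Now for any $I_w\in red(w)$ the leading term of $C_{I_w^{rev}}([pt])$ in $\operatorname{gr}^{\dim X-\ell(w)}A$ is obtained by applying the additive divided differences along $I_w^{rev}$ to $\prod_{\alpha>0}\alpha$; by the braid relations this is a nonzero class depending only on $w$ (namely the classical Schubert class attached to $w$). Hence $C_{I_w^{rev}}([pt])-C_{(I^0_w)^{rev}}([pt])\in\mathfrak{m}^{\dim X-\ell(w)+1}A=\sum_{\ell(v)<\ell(w)}K\cdot C_{(I^0_v)^{rev}}([pt])$, and averaging over $I_w\in red(w)$ yields the displayed congruence.

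The technical heart of the argument — and the only place where inverting the Hurwitz numbers is essential — is the control of the associated graded: over a general coefficient ring the leading-term ideal of $\mathcal{I}_F^W$ may be strictly larger than the classical invariant ideal, the $\mathfrak{m}$-adic and the length filtrations may then disagree, and the averaged classes $P_w^F$ need not even be defined integrally, in accordance with the remarks on approach~(1) in the introduction. Alternatively, one can bypass the geometry: the required unitriangularity of the passage between two Demazure-type bases upon changing reduced words is already part of the structure theory of the formal affine Demazure algebra and its dual developed in \cite{HMSZ} and \cite{CZZ}, where it is proved with respect to the Bruhat order; combined with \cite[Thm.~6.7]{CPZ} this gives the statement directly.
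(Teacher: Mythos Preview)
Your alternative at the very end --- invoking the structure theory of \cite{HMSZ} and \cite{CZZ} to obtain unitriangularity directly --- is precisely the paper's proof. The paper quotes \cite[Prop.~5.8]{HMSZ} and \cite[Lem.~7.1]{CZZ} to the effect that the braid difference $(C_iC_j)^{\circ m_{ij}}-(C_jC_i)^{\circ m_{ij}}$ is a combination of strictly shorter products of $C_k$'s; hence any two $C_{I_w^{rev}}$ with $I_w\in red(w)$ differ by shorter terms, each $P_w^F$ equals $C_{(I^0_w)^{rev}}([pt])$ plus shorter Bott--Samelson classes, and the transition matrix is unitriangular. That is the entire argument; rational coefficients enter only through the definition of $P_w^F$ and the invertibility of $\tau$.

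Your main route via the associated graded is a genuinely different and more geometric argument: you recover the Chow picture as $\operatorname{gr}A\cong CH(X)\otimes K$ via Borel, observe that the additive divided differences do satisfy the braid relations, and deduce the congruence from the coincidence of the $\mathfrak{m}$-adic and length filtrations. This is correct in outline and has real conceptual content, but it is far longer than needed, and one step should be repaired: you assert that $R_\qq$ is a domain in order to pass to $\operatorname{Frac}(R_\qq)$, whereas $R$ is an arbitrary commutative ring here. The fix is to reduce first to the universal formal group law over the Lazard ring (whose rationalization is a polynomial $\qq$-algebra) and then specialize; you should say so explicitly. By contrast, the paper's purely algebraic argument inside $\DD$ never needs a field and never leaves the formal Demazure algebra, which is why it is the cleaner choice.
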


\begin{proof}
By \cite[Prop.~5.8]{HMSZ} and \cite[Lem.~7.1]{CZZ} the difference
$(C_{i}C_{j})^{\circ m_{ij}}-(C_{j}C_{i})^{\circ m_{ij}}$ is a linear combination of
terms of length strictly smaller than $2m_{ij}$ (here $m_{ij}$ is the exponent
in the respective Coxeter relation).
Therefore,
each $P_w^F$ can be written
as $P_w^F=(C_{I_w^{rev}}+ (\text{products of smaller length}))([pt])$. So the matrix
expressing $P_w^F$ in terms of the usual basis
$\{C_{I_w^{rev}}\}_{w\in W}$ corresponding to a fixed
choice of reduced decompositions $\{I_w\}_{w\in W}$ is
upper-triangular with 1's on the main diagonal.
\end{proof}

Consider a root system of Dynkin type $\mathrm{A}_n$. Let
$\{e_1,\ldots,e_{n+1}\}$ be the standard basis with
$\alpha_i=e_i-e_{i+1}$ the set of simple roots. 
Consider a ring homomorphism
\[
R[t_1,t_2,\ldots,t_{n+1}] \to R[[\Lambda]]_F,\text{ given by
}t_i\mapsto x_{-e_i}.
\]
It is $S_{n+1}$-equivariant, therefore, it
induces a  map on quotients
\begin{equation}\label{isosch}
R[t_1,t_2,\ldots,t_{n+1}]/I \to R[[\Lambda]]_F/\mathcal{I}_F^W,
\end{equation}
where $I$ is the ideal generated by symmetric functions.
By Hornbostel-Kiritchenko \cite[Thm.~2.6]{HK} this is an $R$-algebra
isomorphism. 

\begin{defi}
We define
an $F$-Schubert polynomial $\pi^F_w$ to be the image of $P^F_w$ in
the quotient $R[t_1,\ldots, t_{n+1}]/I$ via the isomorphism \eqref{isosch}. 
\end{defi}

If $F$ has the form $F(u,v)=u+v-\beta uv$ for some $\beta\in
R$, i.e. exactly the formal group law for which the respective composites
$C_{I_w}$ do not depend on choices of reduced words of $w$,
then for $\beta=0$
(resp. for $\beta=1$) $\pi^F_w$ coincide with the respective
Schubert (resp. Grothendieck) polynomials of
Lascoux-Sch\"utzenberger (e.g. see \cite{Fo94}, \cite{FK}) and for arbitrary $\beta$ we obtain polynomials
studied in \cite{FK} and \cite{Hu12}.

\medskip

Observe that
under this isomorphism the class of the point $[pt]$ corresponds to
the class of the polynomial $t_1^nt_2^{n-1}\ldots
t_n$
and the formal divided difference operator $C_i(u)=\tfrac{u}{x_{-i}}+\tfrac{s_i(u)}{x_i}=(1+s_i)(\tfrac{u}{x_{-i}})$
(here $x_{-i}=x_{-\alpha_i}=x_{e_{i+1}-e_{i}}$)
corresponds to the operator 
\begin{equation}
C_i(f)=(1+\sigma_i)(\tfrac{f}{\rho_i}),
\end{equation}
where $\sigma_i$ swaps $t_i$ and $t_{i+1}$ and
$\rho_i$ is the formal power series given by $t_i-_F t_{i+1}=F(t_i,\imath(t_{i+1}))$, where $\imath(x)$ is the formal
inverse series of $x$.

By definition, operators $C_i$ are $\RR^{W_i}$-linear (here
$W_i=\langle s_i\rangle$) and satisfy \cite[Prop.~3.13]{CPZ}:
\[
C_i(uv)=C_i(u)v+s_i(u)C_i(v)-\kappa_is_i(u)v\text{
and }C_i(1)=\kappa_i,
\]
where $\kappa_i=\tfrac{1}{\rho_i}+\tfrac{1}{\imath(\rho_i)}\in
\RR$.
We also have
$C_i(t_jf)=t_jC_i(f)$ for $j\neq i,i+1$ and
\[
C_i(t_i)=\tfrac{t_i}{t_i-_F t_{i+1}}+\tfrac{t_{i+1}}{t_{i+1}-_F
  t_i}=\tfrac{t_i-_F t_{i+1}+_F t_{i+1}}{t_i-_F
  t_{i+1}}+\tfrac{t_{i+1}}{t_{i+1}-_F
  t_i}
=\tfrac{F(\rho_i,t_{i+1})-t_{i+1}}{\rho_i}+t_{i+1}\kappa_i.
\]
Using these formulas one can compute the
polynomials $\pi_w^F$.

Our goal now is (using these formulas) to express each polynomial
$\pi_w^F$ 
as a linear combination of sub-monomials of $t_1^nt_2^{n-1}\ldots t_n$
with coefficients from $R$.

\begin{example}
We can write an arbitrary formal group law $F$ as 
\[
F(x,y)=x+y-xyg(x,y).
\]
This implies that 
\[
\tfrac{1}{z}+\tfrac{1}{\imath(z)}=g(z,\imath(z)).
\]
If $F(x,y)=x+y+a_{11}xy+a_{12}xy(x+y)+O(4)$, then
\[
g(x,y)=-a_{11}-a_{12}(x+y)-a_{13}(x^2+y^2)-a_{22}xy+O(3)
\]
and
\[
\imath(x)=-x+a_{11} x^2 - a_{11}^2 x^3 +O(4).
\]
So, after substituting, we obtain
\[
x-_F y=x + (-y+a_{11} y^2 - a_{11}^2 y^3) + a_{11} x (-y+a_{11} y^2) + a_{12}x(-y)(x-y)+
O(4)=
\]
\[
=(x-y) - a_{11}y(x-y)+a_{11}^2y^2(x-y)-a_{12}xy(x-y)+ O(4),
\]
and, hence,
\[
(x-_F y)+(y-_F x)=a_{11}(x-y)^2-a_{11}^2(x+y)(x-y)^2+O(4),
\]
\[
(x-_F y)^2+(y-_F x)^2=2(x-y)^2-2a_{11}(x+y)(x-y)^2+O(4),
\]
\[
(x-_F y)(y-_F x)=-(x-y)^2+a_{11}(x+y)(x-y)^2+O(4).
\]
Combining these together we get
\[
g(x-_F y, y-_F x)=-a_{11}-a_{12}a_{11}(x-y)^2-2a_{13}(x-y)^2+a_{22}(x-y)^2+O(3)=
\]
\[
-a_{11}-(a_{11}a_{12}+2a_{13}-a_{22})(x-y)^2+O(3).
\]
We then obtain
\[
r(x,y)=\tfrac{F(x,y)-y}{x}=1+a_{11}y+a_{12}y(x+y)+a_{13}y(x^2+y^2)+a_{22}xy^2+O(4).
\]
Hence,
\[
r(x-_F y,y)=1+a_{11}y+a_{12}y(x-
a_{11}y(x-y))+a_{13}y((x-y)^2+y^2)+a_{22}(x-y)y^2+O(4)=
\]
\[
=1+a_{11}y+a_{12}xy+(a_{22}-a_{12}a_{11})(x-y)y^2+a_{13}y((x-y)^2+y^2)+O(4),
\]
and, therefore,
\[
r(x-_F y,y)+yg(x-_F y,y-_F x)=1+a_{12}xy+(a_{22}-a_{12}a_{11})xy(x-y)+a_{13}xy(2y-x)+O(4).
\]
Observe that there is a relation $2(a_{22}-a_{11}a_{12})=3a_{13}$ in
the Lazard ring (the only relation in degree 4)
which gives
\[
2[(a_{22}-a_{12}a_{11})xy(x-y)+a_{13}xy(2y-x)]=2a_{13}xy(x+y).
\]
In particular, if $2$ is invertible and $x=t_i$, $y=t_{i+1}$ for the type $\mathrm{A}_2$, then
$t_it_{i+1}(t_i+t_{i+1})$ is in the ideal $I$ generated by symmetric
functions in $t_1,t_2,t_3$, meaning that for $i=1,2$
\[
C_i(t_i)=\tfrac{t_i}{t_i-_F t_{i+1}}+\tfrac{t_{i+1}}{t_{i+1}-_F t_i}=r(t_i-_F t_{i+1},t_{i+1})+t_{i+1}\kappa_i=1+a_{12}t_it_{i+1}.
\]
\end{example}

\begin{example} Using the formulas above
we
obtain the following expressions for $\pi^F_w$ in the
$\mathrm{A}_2$-case for an arbitrary $F$
(this agrees with computations at the end of \cite{HK} and \cite{CPZ})
$$
\pi_1^F=C_1([pt])=t_1t_2,\qquad
\pi_2^F=C_2([pt])=t_1^2,
$$

Indeed, 
\[
C_1(t_1^2t_2)=\tfrac{t_1^2t_2}{t_1-_F t_2}+\tfrac{t_1t_2^2}{t_2-_F
  t_1}=t_1t_2C_1(t_1)=t_1t_2
\]
and
\[
C_2(t_1^2t_2)=\tfrac{t_1^2t_2}{t_2-_F t_3} +\tfrac{t_1^2t_3}{t_3-_F
  t_2}=t_1^2(\tfrac{t_2}{t_2-_F t_3}+\tfrac{t_3}{t_3-_F
  t_2})=t_1^2(1+a_{12}t_2t_3)=t_1^2.
\]
$$
\pi_{21}^F=C_{12}([pt])=t_1+t_2+a_{11}\pi_1^F,\qquad
\pi^F_{12}=C_{21}([pt])=C_2(t_1t_2)=t_1,
$$
Indeed,
\[
C_1(t_1^2)=C_1(t_1)t_1+t_2C_1(t_1)-\kappa_1 t_1t_2=(1+a_{12}t_1t_2)(t_1+t_2)+a_{11}t_1t_2=t_1+t_2+a_{11}t_1t_2
\]
and
\[
C_2(t_1t_2)=t_1C_2(t_2)=t_1(1+a_{12}t_2t_3)=t_1
\]

And for the element of maximal length $w_0=(121)=(212)$ we obtain
$$
C_{212}([pt])=1+a_{12} \pi_2^F,\qquad
C_{121}([pt])=C_1(t_1)=1+a_{12} t_1 t_2=1+a_{12}\pi_1^F.
$$

Indeed,
\[
C_2(t_1+t_2+a_{11}t_1t_2)=t_1C_2(1)+C_2(t_2)+a_{11}t_1C_2(t_2)=
\]
\[
=t_1(-a_{11}-(a_{11}a_{12}+2a_{13}-a_{22})(t_2-t_3)^2)+1+a_{12}t_2t_3+a_{11}t_1(1+a_{12}t_2t_3)=
\]
\[
1+a_{12}t_2t_3-\tfrac{1}{2}a_{13}t_1(t_2-t_3)^2=1+a_{12}t_2t_3=1+a_{12}t_1^2
\]
as $t_1^2\equiv t_2t_3$ and $t_1(t_2-t_3)^2$ is in the ideal ($t_1(t_2-t_3)^2\equiv
t_1(t_2^2+t_3^2)\equiv t_1^3 \equiv 0$).

Therefore,
\[
\pi_{w_0}^F=1+\tfrac{1}{2}a_{12}(t_1^2+t_1t_2).
\]

Observe that the twisted braid relation (which leads to the
dependence on choices) of \cite[Prop. 5.8]{HMSZ} then coincides with
\[
C_{121}-a_{12} C_1=C_{212}-a_{12} C_2.
\]
\end{example}

\section{A special elliptic formal group law and the Kazhdan-Lustig basis}

Consider an elliptic curve given in Tate coordinates by 
\[(1-\mu_1 x-\mu_2 x^2)y=x^3.
\] 
The corresponding formal group law over the coefficient ring
$R=\mathbb{Z}[\mu_1,\mu_2]$ is given by (e.g. \cite[Example~63]{BB10}), 
\[
F(x,y):=\tfrac{x+y-\mu_1xy}{1+\mu_2xy}\]
and will be called a {\em special elliptic} formal group law.
Observe that by definition, we have
\[
F(x,y)=x+y-xy(\mu_1+\mu_2F(x,y)),\text{ so } a_{11}=-\mu_1\text{ and }a_{12}=-\mu_2.
\]

\medskip

By \cite[Theorem~5.14]{HMSZ}
for the type $A_n$ the
algebra $\DD$ is generated by operators  $C_i$, $i\in 1..n$,
and multiplications by elements $u\in \RR$ subject to the folowing relations:
\begin{itemize}
\item[(a)] $C_i^2=\mu_1 C_i$ 
\item[(b)] $C_{ij}=C_{ji}$ for $|i-j|>1$,
\item[(c)] $C_{iji} - C_{jij}=\mu_2(C_j-C_i)$ for $|i-j|=1$ and
\item[(d)] $C_iu=s_i(u)C_i+\mu_1u-C_i(u)$,
\end{itemize}

\medskip

Recall that the Iwahori-Hecke algebra $\mathcal{H}$ of the
symmetric group $S_{n+1}$ is
(after the respective normalization)
an $\mathbb{Z}[t,t^{-1}]$-algebra with generators $T_i$, $i=1..n$, subject to
the following relations:
\begin{itemize}
\item[(A)] $(T_i-t^{-1})(T_i+t)=0$ or, equivalently, $T_i^2=(t^{-1}-t)T_i+1$,
\item[(B)] $T_{ij}=T_{ji}$ for $|i-j|>1$ and 
\item[(C)] $T_{iji} =T_{jij}$ for $|i-j|=1$.
\end{itemize}

Observe that $T_i$'s appearing in the classical definition of the Iwahori-Hecke
algebra in \cite[Def.~7.1.1]{CG10} correspond to $tT_i$ in our notation, where $t=q^{-1/2}$.

\medskip

Following \cite[Def.~5.3]{HMSZ} let $\mathrm{D}_F$ denote the $R$-subalgebra of $\DD$ generated by
the elements $C_i$, $i=1..n$, only. In
\cite[Prop.~6.1]{HMSZ} it was shown that for $F=F_a$
(resp. $F=F_m$) $\mathrm{D}_F$ is isomorphic to the
nil-Hecke algebra (resp. the 0-Hecke algebra) of Kostant-Kumar. 

\medskip

Comparing the relations for $\mathrm{D}_F$ and $\mathcal{H}$ we see that
for $R=\zz[t,t^{-1}][\tfrac{1}{t+t^{-1}}]$, $\mu_1=1$ and $\mu_2=-\tfrac{1}{(t+t^{-1})^2}$
there is an isomorphism of $R$-algebras (see \cite{CZZ1} and
\cite{LNZ} for the case of an arbitrary root system)
\begin{equation}\label{Heckiso}
\mathcal{H}[\tfrac{1}{t+t^{-1}}]\simeq\mathrm{D}_F\quad\text{ given on generators by }\;
T_i\mapsto (t+t^{-1})C_i-t,\; i=1..n.
\end{equation}

By definition of \eqref{Heckiso} the involution on $\mathcal{H}$ (sending $t\mapsto t^{-1}$
and $T_i\mapsto T_i^{-1}$) corresponds to the
involution on $\mathrm{D}_F$ obtained by extending the involution
$t\mapsto t^{-1}$ on the coefficient ring. Observe that each
push-pull element $C_i=\tfrac{1}{t+t^{-1}}(T_i+t)$ is invariant under this involution.

\medskip

Consider the Kazhdan-Lusztig basis $\{C_w'\}_{w\in W}$ on $\mathcal{H}$
(e.g. see \cite{CG10}). Recall that it is unique and {\em does not depend
on choices} of reduced decompositions.
After the respective normalization we have
\[
C_w'=T_w+\sum_{v<w}t\pi_{v,w}(t) T_v,
\]
where $\deg \pi_{v,w}\le l(w)-l(v)-1$ and $\pi_{v,w}$ are the
Kazhdan-Lusztig polynomials.
For instance, $C_i'=T_i+t$, $C_{ij}'=T_{ij}+t(T_i+T_j)+t^2$ and $C_{iji}'=T_{iji}+t(T_{ij}+T_{ji})+t^2(T_i+T_j)+t^3$.

Let $C_w$ denote the element in $\mathrm{D}_F$ that corresponds to
$C_w'$ via \eqref{Heckiso}. Choose a reduced word $I_w$ for each
$w\in W$. Then 
\[
C_w=(t+t^{-1})^{l(w)}(C_{I_w}+lower\, degree\, terms)+
\]
\[+\sum_{v<w} t
\pi_{v,w}(t)(t+t^{-1})^{l(v)}(C_{I_v}+{lower\, degree\, terms}) 
\]
where the right hand side does not depend on choices of  reduced
decompositions. This suggests the following

\begin{defi}
We define the special elliptic polynomial $\pi_w^{se}$ to be
the image  in $\zz[t,t^{-1}][t_1,\ldots,t_{n+1}]/I$ of the element 
$\tfrac{1}{(t+t^{-1})^{l(w)}}C_{w^{-1}}([pt])$ via
\eqref{isosch}. 
\end{defi}

We expect polynomials $\pi_w^{se}$ to play the same role (in the
special elliptic case) as the
Schubert (resp. Grothendieck) polynomials for Chow groups
(resp. $K_0$).

\begin{example} For the type $A_2$ we obtain
\[
\pi_i^{se}=C_i([pt]),\quad \pi^{se}_{ij}=C_jC_i([pt])
\]
and for the element of maximal length we obtain exactly the twisted
braid relation
\[
\pi_{w_0}^{se}=(C_{121}+\mu_2 C_1)([pt])=(C_{212}+\mu_2 C_2)([pt])=1.
\]
\end{example}

\begin{rem}
It would be interesting to see
\begin{enumerate}
\item that $\pi_{w_0}^{se}=1$, for the element of maximal length $w_0$.
\item
whether $\pi_w^{se}$ corresponds to the class of an
{\em actual} resolution of the respective Schubert
variety $X_w$.
\end{enumerate}
\end{rem}

\end{document}